\theoremstyle{plain}
\newtheorem{thm}{Theorem}[section]
\newtheorem{lem}[thm]{Lemma}
\newtheorem{prop}[thm]{Proposition}
\theoremstyle{definition}
\newtheorem{df}[thm]{Definition}
\theoremstyle{remark}
\newcommand{\Z}{\mathbb Z}
\newcommand{\N}{\mathbb N}
\begin{document}

\author{M. Achigar}
\title{A note on Anosov homeomorphisms}
\date{\today}
\maketitle

\begin{abstract}
For an $\alpha$-expansive homeomorphism of a compact space we give an elementary proof of the following well-known result in topological dynamics: A sufficient condition for the homeomorphism to have the shadowing property is that it has the $\alpha$-shadowing property for one-jump pseudo orbits (known as the local product structure property). The proof relies on a reformulation of the property of expansiveness in terms of the pseudo orbits of the system.
\end{abstract}

\section{Introduction}

In \cite{AH}*{Theorem 1.2.1} it is proved, among other things, that \emph{Anosov diffeomorphisms} has the \emph{shadowing property}, called \emph{pseudo orbit tracing property}\footnote{For terminology and notation in the Introduction we refer \cite{AH}*{\S 1.2}} there. In the proof, on \cite{AH}*{p. 23}, the authors only uses the so called \emph{local product structure property}: if $d(x,y)<\delta$ then $W^s_\varepsilon(x)\cap W^u_\varepsilon(y)\neq\varnothing$ if $\delta>0$ is chosen small enough for a given $\varepsilon>0$, and the special (hyperbolic) properties of the metric $d$ coming from the Riemannian structure of the manifold supporting the system  \cite{AH}*{(B), p.\,20}.

As can be easily checked the first of these two conditions is equivalent to the shadowing property for pseudo orbits with one jump, that is, for every $\varepsilon>0$ there exists $\delta>0$ such that for every bi-sequence of points of the form 
$$
\ldots,\;z_{-2}=T^{-2}y,\;z_{-1}=T^{-1}y,\;z_0=x,\;z_1=Tx,\;z_2=T^2x,\;\ldots
$$
with $d(x,y)<\delta$, where $T$ denotes the diffeomorphism, there exists a point $z$ such that $d(T^nz,z_n)<\varepsilon$ for all $n\in\Z$.

On the other hand, in \cite{Fat}*{Theorem 5.1} it is shown that for every expansive homeomorphism on a compact space there exists a compatible metric (which we call \emph{hyperbolic metric}) with similar properties to those of the metric $d$ in the case of Anosov diffeomorphisms. Then the proof of the shadowing property for Anosov diffeomorphisms given in \cite{AH} carries over the more general case of expansive systems.

In this note we give an alternative and elementary proof of this well-known shadowing condition (Proposition \ref{prop:1step_shadowing}) not making use of Fathi's hyperbolic metric. Instead we use a reformulation of the property of expansiveness of a system (Proposition \ref{prop:po_semiexp}) which seems interesting on its own right.

\section{Terminology and notation}

In this note $X$ denotes a compact metric space with metric $d$ and $T\colon X\to X$ a homeomorphism. The \emph{orbit} of a point $x\in X$ is the bi-sequence $O(x)=(T^nx)_{n\in\Z}$.

\begin{df}\label{def:exp}
 $T$ is said to be \emph{expansive} if there is $\alpha>0$ (called  \emph{expansivity constant}) such that if $x,y\in X$ and $d(T^nx,T^ny)\leq\alpha$ for all $n\in\Z$ then $x=y$.
\end{df}

\begin{df}
 Let $\xi=(x_n)_{n\in\Z}$ be a bi-sequence of elements of $X$.  If $\delta>0$ and $d(Tx_n,x_{n+1})<\delta$ for all $n\in\Z$ then $\xi$ is called \emph{$\delta$-pseudo orbit}. We say that $\xi$ has a \emph{jump} at the $n$-th step if $Tx_{n-1}\neq x_n$. Given $\varepsilon>0$ a bi-sequence $\eta=(y_n)_{n\in\Z}$ is said to \emph{$\varepsilon$-shadow} $\xi$ if 
 $d(x_n,y_n)<\varepsilon$ for all $n\in\Z$. If in the previous situation $\eta=O(x)$ is the orbit of a point $x\in X$ we simply say that $x$ $\varepsilon$-shadows $\xi$ and that $\xi$ is $\varepsilon$-shadowed (or $\varepsilon$-shadowable).
\end{df}

\begin{df}
 Given $\varepsilon>0$ we say that $T$ has the \emph{$\varepsilon$-shadowing property} if for some $\delta>0$ every $\delta$-pseudo orbit is $\varepsilon$-shadowable. We say that $T$ has the \emph{shadowing property} if it has the $\varepsilon$-shadowing property for all $\varepsilon>0$. If $T$ is expansive and has the shadowing property then it is called \emph{Anosov homeomorphism}.
\end{df}

\section{Rephrasing expansivity}

The following simple result states an equivalent condition for the expansiveness of the system $(X,T)$. This alternative characterization of expansiveness will allow us to give an elementary proof of the shadowing condition in Proposition \ref{prop:1step_shadowing}.

\begin{prop}\label{prop:po_semiexp}
 Let $\alpha>0$. The following conditions are equivalent.
 \begin{enumerate}
  \item $T$ is expansive with expansivity constant $\alpha$.
  \item For every $\varepsilon>0$ there exists $\delta>0$ such that
  $$
  \qquad\text{if}\quad d(x_n,y_n)\leq\alpha\text{ for all }n\in\Z\quad\text{then}\quad d(x_n,y_n)<\varepsilon\text{ for all }n\in\Z,
  $$
  for every pair of $\delta$-pseudo orbits $(x_n)_{n\in\Z}$ and $(y_n)_{n\in\Z}$ of $T$.
 \end{enumerate}
\end{prop}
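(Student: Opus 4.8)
The plan is to prove both implications, with the bulk of the work going into $(1)\Rightarrow(2)$, which I expect to be the main obstacle. The reverse direction $(2)\Rightarrow(1)$ should be nearly immediate: if $x,y\in X$ satisfy $d(T^nx,T^ny)\le\alpha$ for all $n\in\Z$, then the orbits $O(x)=(T^nx)$ and $O(y)=(T^ny)$ are genuine orbits, hence $\delta$-pseudo orbits for \emph{every} $\delta>0$; applying (2) with any $\varepsilon>0$ (and its associated $\delta$) forces $d(T^nx,T^ny)<\varepsilon$ for all $n$, and letting $\varepsilon\to0$ gives $T^nx=T^ny$ for all $n$, in particular $x=y$. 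So $T$ is expansive with constant $\alpha$.

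For $(1)\Rightarrow(2)$ I would argue by contradiction and compactness. Suppose (2) fails for some $\varepsilon>0$: then for every $k\in\N$ (taking $\delta=1/k$) there are $1/k$-pseudo orbits $\xi^k=(x^k_n)_n$ and $\eta^k=(y^k_n)_n$ with $d(x^k_n,y^k_n)\le\alpha$ for all $n$, yet $d(x^k_{m_k},y^k_{m_k})\ge\varepsilon$ at some index $m_k$. After re-indexing (shifting both pseudo orbits by $m_k$, which preserves the pseudo-orbit property and all the inequalities) I may assume $m_k=0$, so $d(x^k_0,y^k_0)\ge\varepsilon$ for every $k$. Now I pass to a subsequence along which, for each fixed $n$, the points $x^k_n$ and $y^k_n$ converge in the compact space $X$; a diagonal argument over all $n\in\Z$ produces limit bi-sequences $(x_n)_n$ and $(y_n)_n$.

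The key step is to check that these limits are honest orbits. Because $\xi^k$ is a $1/k$-pseudo orbit, $d(Tx^k_n,x^k_{n+1})<1/k\to0$; passing to the limit and using continuity of $T$ gives $Tx_n=x_{n+1}$ for all $n$, so $(x_n)=O(x_0)$, and likewise $(y_n)=O(y_0)$. The hypotheses survive the limit as weak inequalities: $d(x_n,y_n)\le\alpha$ for all $n$, so by expansiveness with constant $\alpha$ we get $x_0=y_0$, whence $d(x_0,y_0)=0$. But $d(x^k_0,y^k_0)\ge\varepsilon$ passes to the limit to give $d(x_0,y_0)\ge\varepsilon>0$, a contradiction. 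The only delicate points are the bookkeeping of the diagonal extraction across $n\in\Z$ and verifying that the strict inequality $\ge\varepsilon$ (a closed condition) is preserved under limits while the pseudo-orbit defect $1/k$ is driven to zero; both are routine once the contradiction setup is in place.
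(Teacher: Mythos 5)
Your proof is correct and follows essentially the same route as the paper's: a compactness-and-contradiction argument for $(1)\Rightarrow(2)$ after normalizing the bad index to $0$, together with the observation in $(2)\Rightarrow(1)$ that true orbits are $\delta$-pseudo orbits for every $\delta>0$. The only cosmetic difference is that you extract convergent subsequences at every index via a diagonal argument, whereas the paper extracts convergence only at index $0$ and propagates it to all $n\in\Z$ using the vanishing pseudo-orbit defect and continuity of $T$ and $T^{-1}$; both variants are routine and equally valid.
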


\begin{proof}
 \mbox{$(1\Rightarrow2)$\;} Suppose that the thesis is not true. Then, there exists $\varepsilon>0$ such that for every $k\in\N$ one can find $\nicefrac1k$-pseudo orbits $(x^k_n)_{n\in\Z}$ and $(y^k_n)_{n\in\Z}$ of $T$ satisfying $d(x_n^k,y_n^k)\leq\alpha$ for all $n\in\Z$ but $d(x_{n_k}^k,y_{n_k}^k)\geq\varepsilon$ for a suitable $n_k\in\Z$. Changing the indexing of the pseudo orbits if necessary it can be assumed that $n_k=0$ for all $k\in\N$. As $X$ is compact it can be also assumed that $x_0^k\to x$ and $y_0^k\to y$ for some $x,y\in X$. It is easy to see that then $x_n^k\to T^nx$ and $y_n^k\to T^ny$ for all $n\in\Z$ (the pseudo orbits converge pointwise to actual orbits). But now, as $d(x_n^k,y_n^k)\leq\alpha$ for all $k\in\N$ and $n\in\Z$ we have $d(T^nx,T^ny)\leq\alpha$ for all $n\in\Z$, and as $d(x_0^k,y_0^k)\geq\varepsilon$ for al $k\in\N$ we get $d(x,y)\geq\varepsilon$, so that $x\neq y$. This contradicts that $\alpha$ in an expansivity constant and the proof finishes.

 \mbox{$(2\Rightarrow1)$\;} Suppose $x,y\in X$ verifies $d(T^nx.T^ny)\leq\alpha$ for all $n\in\Z$,  and note that $(T^nx)_{n\in\Z}$ and $(T^ny)_{n\in\Z}$ are $\delta$-pseudo orbits for every $\delta>0$. Therefore, by the hypothesis, for every $\varepsilon>0$ we have $d(T^nx,T^ny)<\varepsilon$ for all $n\in\Z$, that is, $(T^nx)_{n\in\Z}=(T^ny)_{n\in\Z}$. Then $x=y$ and hence $\alpha$ is an expansivity constant.
\end{proof}

For later reference we recall from \cite{Br62}*{Theorem 5} the following basic property of expansive homeomorphisms on compact spaces known as \emph{uniform expansivity}.

\begin{prop}\label{prop:uexp}
 Let $\alpha>0$. The following conditions are equivalent.
 \begin{enumerate}
  \item $T$ is expansive with expansivity constant $\alpha$.
  \item For every $\varepsilon>0$ there exists $N\in\N$ such that for every $x,y\in X$
  $$
  \text{if}\quad d(T^nx,T^ny)\leq\alpha\text{ for all }|n|\leq N\quad\text{then}\quad d(x,y)<\varepsilon.
  $$
 \end{enumerate}
\end{prop}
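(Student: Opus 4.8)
The plan is to establish the two implications separately, the reverse one being immediate and the forward one resting on a compactness argument in the same spirit as the $(1\Rightarrow2)$ part of Proposition \ref{prop:po_semiexp}.

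For $(2\Rightarrow1)$ I would argue directly. Suppose (2) holds and take $x,y\in X$ with $d(T^nx,T^ny)\leq\alpha$ for every $n\in\Z$. Fix an arbitrary $\varepsilon>0$ and let $N\in\N$ be the integer furnished by (2). The hypothesis holds in particular for all $|n|\leq N$, so condition (2) gives $d(x,y)<\varepsilon$. Since $\varepsilon>0$ was arbitrary we conclude $d(x,y)=0$, that is $x=y$, and hence $\alpha$ is an expansivity constant.

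For $(1\Rightarrow2)$, which is the substantial direction, I would proceed by contradiction. Assuming the thesis fails, there is an $\varepsilon>0$ such that for every $N\in\N$ one can choose $x_N,y_N\in X$ satisfying $d(T^nx_N,T^ny_N)\leq\alpha$ for all $|n|\leq N$ but $d(x_N,y_N)\geq\varepsilon$. Invoking compactness of $X$, I would pass to a subsequence along which $x_N\to x$ and $y_N\to y$ for some $x,y\in X$; the inequality $d(x_N,y_N)\geq\varepsilon$ then survives in the limit to give $d(x,y)\geq\varepsilon$, so in particular $x\neq y$. The crucial step is to verify that the limit pair stays within distance $\alpha$ along the \emph{entire} orbit: fixing $n\in\Z$, for every index $N\geq|n|$ in the subsequence one has $d(T^nx_N,T^ny_N)\leq\alpha$, and continuity of the iterate $T^n$ yields $T^nx_N\to T^nx$ and $T^ny_N\to T^ny$, so letting $N\to\infty$ gives $d(T^nx,T^ny)\leq\alpha$. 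As $n$ is arbitrary this holds for all $n\in\Z$, and expansivity with constant $\alpha$ forces $x=y$, contradicting $x\neq y$.

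The main obstacle, modest as it is, lies entirely in this forward implication: one must extract convergent subsequences and confirm that the bound $d\leq\alpha$ persists in the limit simultaneously for each fixed $n$, which is exactly where compactness of $X$ and continuity of every $T^n$ enter. The remaining details are routine bookkeeping, and the structure mirrors Proposition \ref{prop:po_semiexp}, the difference being that here the approximating sequences are genuine orbits truncated to the growing windows $|n|\leq N$ rather than $\nicefrac1k$-pseudo orbits.
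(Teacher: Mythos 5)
Your proof is correct. The paper itself gives no proof of this proposition—it recalls it from Bryant \cite{Br62}—but your argument (the immediate direction $(2\Rightarrow1)$, and for $(1\Rightarrow2)$ the contradiction-plus-compactness extraction of limits $x_N\to x$, $y_N\to y$ with the bound $d(T^nx,T^ny)\leq\alpha$ passing to the limit for each fixed $n$) is the standard one, and it mirrors exactly the scheme of the paper's own proof of Proposition \ref{prop:po_semiexp}.
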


We also recall the following easy result that can be found in  \cite{Wal}*{Lemma 8}.

\begin{lem}\label{lem:finite_shadowing}
 $T$ has the shadowing property if and only if $T$ has the shadowing property for pseudo orbits with a finite number of jumps.
\end{lem}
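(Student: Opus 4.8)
The plan is to prove the nontrivial implication of the equivalence; the forward direction is immediate, since every pseudo orbit with finitely many jumps is in particular a pseudo orbit, so the $\varepsilon$-shadowing property for all pseudo orbits trivially restricts to those with finitely many jumps. For the converse I would argue by approximation: given an arbitrary $\delta$-pseudo orbit I build a sequence of $\delta$-pseudo orbits, each with only finitely many jumps, which agree with the original one on longer and longer windows, shadow each of them by the hypothesis, and then extract a limit using the compactness of $X$.

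Concretely, fix $\varepsilon>0$. I would apply the finite-jump shadowing property with the smaller constant $\varepsilon/2$ to obtain a $\delta>0$ such that every $\delta$-pseudo orbit with finitely many jumps is $(\varepsilon/2)$-shadowable. Now let $\xi=(x_n)_{n\in\Z}$ be any $\delta$-pseudo orbit. For each $k\in\N$ I define the truncated bi-sequence $\xi^k=(x^k_n)_{n\in\Z}$ by
$$
x^k_n=\begin{cases}T^{\,n+k}x_{-k}&\text{if }n<-k,\\ x_n&\text{if }-k\le n\le k,\\ T^{\,n-k}x_{k}&\text{if }n>k,\end{cases}
$$
so that $\xi^k$ coincides with $\xi$ on the window $|n|\le k$ and is extended outside it by the genuine orbits of its two endpoints $x_{-k}$ and $x_k$. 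A short check shows that $\xi^k$ is again a $\delta$-pseudo orbit: the transitions with $n\le -k$ and $n>k$ lie along actual orbits and hence have zero error, the transitions with $-k<n\le k$ are exactly those of $\xi$ and thus have error less than $\delta$, and in particular the only possible jumps occur inside the finite window, so $\xi^k$ has finitely many jumps.

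By the choice of $\delta$ there is then a point $z^k$ with $d(T^nz^k,x^k_n)<\varepsilon/2$ for all $n\in\Z$; restricting to the window gives $d(T^nz^k,x_n)<\varepsilon/2$ whenever $|n|\le k$. By compactness of $X$ I pass to a subsequence with $z^{k_j}\to z$ for some $z\in X$. Fixing any $n\in\Z$ and using that $d(T^nz^{k_j},x_n)<\varepsilon/2$ for all $j$ large enough that $k_j\ge|n|$, the continuity of $T^n$ together with the continuity of the metric yields $d(T^nz,x_n)\le\varepsilon/2<\varepsilon$. Hence $z$ $\varepsilon$-shadows $\xi$, which is what we wanted.

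I expect the main obstacle to be a purely technical one: the passage to the limit only preserves the non-strict inequality $d(T^nz,x_n)\le\varepsilon/2$, so a naive argument with $\varepsilon$ in place of $\varepsilon/2$ would deliver $d(T^nz,x_n)\le\varepsilon$ rather than the strict bound required by the definition of $\varepsilon$-shadowing. This is exactly why I build in the factor of $\tfrac12$ (any fixed shrinkage would do) from the outset. The remaining care is in verifying that the truncated sequences $\xi^k$ are genuine $\delta$-pseudo orbits, which reduces to observing that the two boundary transitions introduced by the gluing are exact, of error zero.
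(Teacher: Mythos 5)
Your proof is correct: the truncations $\xi^k$ are indeed $\delta$-pseudo orbits with finitely many jumps (the two glued-in transitions have zero error), and the compactness argument with the $\varepsilon/2$ safety margin properly handles the strict inequality required by the definition of shadowing. The paper itself gives no proof of this lemma---it is quoted from \cite{Wal}*{Lemma 8}---but your truncate--shadow--pass-to-the-limit argument is essentially the standard proof found in that reference, so there is nothing to reconcile.
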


\section{The shadowing condition}

As pointed out in the Introduction the next is a known result that can be proved with the techniques in \cite{AH}*{p. 23} replacing the metric coming from the Riemannian structure in that argument by Fathi's hyperbolic metric \cite{Fat}*{Theroem 5.1}.

\begin{prop}\label{prop:1step_shadowing}
 If $T$ is expansive with expansivity constant $\alpha>0$ then the following conditions are equivalent.
  \begin{enumerate}
  \item $T$ has the shadowing property.
  \item There exists $\delta>0$ such that every one-jump $\delta$-pseudo orbit is $\alpha$-shadowed.
 \end{enumerate}
\end{prop}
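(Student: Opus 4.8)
The implication $(1)\Rightarrow(2)$ I expect to be immediate: if $T$ has the shadowing property, applying it with $\varepsilon=\alpha$ gives $\delta>0$ such that every $\delta$-pseudo orbit is $\alpha$-shadowed, and a one-jump $\delta$-pseudo orbit is a special case. All the work is in $(2)\Rightarrow(1)$. The plan is to reduce it, via Lemma \ref{lem:finite_shadowing}, to shadowing pseudo orbits with finitely many jumps, and then to induct on the number of jumps. The crucial point — and what I expect to be the main obstacle — is to run this induction with a \emph{single} $\delta$ that does not deteriorate as the number of jumps grows: a naive induction that corrects jumps one at a time fails, because each correction replaces an exact orbit segment by a merely $\alpha$-close one, so regluing creates jumps of size about $\alpha$ and the errors accumulate. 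The device that defeats this is Proposition \ref{prop:po_semiexp}, which I will use to \emph{reset} the tracking error back down to a prescribed value after each step, so that nothing accumulates.

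First I would strengthen the hypothesis. Write $\delta(\gamma)$ for the constant given by Proposition \ref{prop:po_semiexp} with target $\gamma$, and $\delta_0$ for the constant in (2). I claim that for every $\gamma>0$, every one-jump $\delta_1(\gamma)$-pseudo orbit with $\delta_1(\gamma):=\min\{\delta_0,\delta(\gamma)\}$ is $\gamma$-shadowed: by (2) such a pseudo orbit is $\alpha$-shadowed by some $O(w)$, and since both it and $O(w)$ are $\delta(\gamma)$-pseudo orbits lying within $\alpha$ of one another, Proposition \ref{prop:po_semiexp} forces them within $\gamma$.

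Next I would fix $\beta\in(0,\alpha)$ small enough that $\beta<\delta_1(\alpha-\beta)$ (possible since $\delta(\cdot)$ is nondecreasing, so $\delta_1(\alpha-\beta)$ is bounded below by a positive constant for small $\beta$), and set $\delta:=\min\{\delta(\beta),\,\delta_1(\alpha-\beta)-\beta\}>0$. I then prove by induction on $k$ that every $\delta$-pseudo orbit with at most $k$ jumps is $\beta$-shadowed, the case $k=0$ being trivial. For the step, given $\xi=(x_n)$ with jumps at $n_1<\dots<n_k$, I delete the last jump by continuing the orbit of $x_{n_k-1}$ for $n\ge n_k$; this gives a $\delta$-pseudo orbit $\xi^-$ with $k-1$ jumps, which by the inductive hypothesis is $\beta$-shadowed by some $O(z^-)$. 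Splicing, set $\theta_n=T^nz^-$ for $n<n_k$ and $\theta_n=x_n$ for $n\ge n_k$. Then $\theta$ has a single jump, at $n_k$, of size at most $\beta+\delta\le\delta_1(\alpha-\beta)$ (using $d(T^{n_k}z^-,\xi^-_{n_k})<\beta$ and that the deleted jump is $<\delta$), and $\theta$ $\beta$-shadows $\xi$. By the strengthened hypothesis $\theta$ is $(\alpha-\beta)$-shadowed by some $O(z)$, so $d(T^nz,x_n)<(\alpha-\beta)+\beta=\alpha$ for all $n$; and since $O(z)$ and $\xi$ are $\delta(\beta)$-pseudo orbits within $\alpha$ of each other, Proposition \ref{prop:po_semiexp} sharpens this to $d(T^nz,x_n)<\beta$, closing the induction.

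This yields one fixed $\delta$ for which every finite-jump $\delta$-pseudo orbit is $\beta$-shadowed. To finish I would note that the same reset gives, for an arbitrary target $\varepsilon>0$, $\varepsilon$-shadowing of every finite-jump $\min\{\delta,\delta(\varepsilon)\}$-pseudo orbit (shadow within $\beta<\alpha$, then apply Proposition \ref{prop:po_semiexp} with target $\varepsilon$); hence $T$ has the shadowing property for pseudo orbits with finitely many jumps, and Lemma \ref{lem:finite_shadowing} delivers the full shadowing property. The only delicate bookkeeping is keeping $\beta+\delta\le\delta_1(\alpha-\beta)$ and $\delta\le\delta(\beta)$ simultaneously with $\beta,\delta$ independent of $k$, and checking that the deletion and splicing introduce no extra jumps. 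Notably, on this route Proposition \ref{prop:po_semiexp} does all the work and uniform expansivity (Proposition \ref{prop:uexp}) is not needed.
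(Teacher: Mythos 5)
Your proof is correct, and the mechanism of your inductive step is genuinely different from the paper's. The outer frame is the same in both arguments: reduce to finitely many jumps via Lemma \ref{lem:finite_shadowing}, induct on the number of jumps with constants independent of $k$, use Proposition \ref{prop:po_semiexp} to reset the tracking error at each stage, and recover an arbitrary target $\varepsilon$ from the single scale $\alpha$ at the end. The difference is in how the last jump is removed. The paper first uses uniform continuity of $T$ to install a buffer of $2N+1$ exact steps just before the last jump (its condition \eqref{ec:004}), where $N$ comes from uniform expansivity (Proposition \ref{prop:uexp}); it then splits $\xi$ into a $(k-1)$-jump pseudo orbit $\eta$ and a one-jump pseudo orbit $\zeta$ which \emph{overlap} along that buffer, shadows each by points $y$ and $z$ (sharpened to $\nicefrac\alpha2$ via Proposition \ref{prop:po_semiexp}), and glues the two shadowing \emph{orbits} at the middle of the overlap: uniform expansivity is precisely what makes the gluing jump $d(T^Ny,T^Nz)<\delta$, so that hypothesis (2) applies to the spliced sequence $\tau$. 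You avoid the overlap-and-glue step entirely: you splice the orbit shadowing the $(k-1)$-jump part directly onto the exact tail of $\xi$, accept a jump of size $<\beta+\delta$, and absorb it by first upgrading hypothesis (2), via Proposition \ref{prop:po_semiexp}, to ``one-jump pseudo orbits with jump $<\delta_1(\gamma)$ are $\gamma$-shadowed''; taking $\gamma=\alpha-\beta$ keeps the total error below $(\alpha-\beta)+\beta=\alpha$, which Proposition \ref{prop:po_semiexp} then resets to $\beta$, closing the induction. What your route buys is economy of means: neither Proposition \ref{prop:uexp} nor uniform continuity of $T$ is needed, as you correctly note --- Proposition \ref{prop:po_semiexp} carries the whole argument. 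The price is heavier parameter bookkeeping (arranging $\beta<\delta_1(\alpha-\beta)$ and $\delta\leq\min\{\delta(\beta),\,\delta_1(\alpha-\beta)-\beta\}$ simultaneously, which does work, together with the correct but essential observation that $\gamma\mapsto\delta(\gamma)$ may be chosen nondecreasing), whereas the paper operates throughout with the fixed scales $\alpha$, $\nicefrac\alpha2$ and a single $\delta$, at the cost of invoking the two extra tools.
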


\begin{proof} 
 Clearly we only need to prove that the last statement implies the first one. By Lemma \ref{lem:finite_shadowing} it is enough to show that for every $\varepsilon>0$ there exists $\rho>0$ such that all $\rho$-pseudo orbits with a finite number of jumps are $\varepsilon$-shadowed. To do that it is sufficient to find a $\rho>0$ corresponding only to $\varepsilon=\alpha$, because by Proposition \ref{prop:po_semiexp} for any $\varepsilon>0$ taking a smaller value of $\rho$, more precisely choosing $\rho\leq\delta$ where $\delta$ is given by the cited proposition, we have that to $\alpha$-shadow a $\rho$-pseudo orbit is equivalent to $\varepsilon$-shadow it.

 To find $\rho>0$ such that every $\rho$-pseudo orbit with a finite number of jumps is $\alpha$-shadowed, let $\delta>0$ be as in the statement of this proposition, that is, such that
 \begin{equation}\label{ec:001}
  \textit{every }\delta\textit{-pseudo orbit with one jump is }\alpha\textit{-shadowed.}
 \end{equation}
 By Proposition \ref{prop:po_semiexp} (with $\varepsilon=\nicefrac\alpha2$) we can take a smaller $\delta$ to also guarantee that
 \begin{equation}\label{ec:002}
  \textit{if a }\delta\textit{-pseudo orbit }\xi\textit{ }\alpha\textit{-shadows a }\delta\textit{-pseudo orbit }\eta\textit{ then }\xi\textit{ }\nicefrac\alpha2\textit{-shadows }\eta.
 \end{equation}
 Obviously we can also require that $\delta\leq\alpha$.
 For this $\delta$ there exists $N\in\N$ such that
 \begin{equation}\label{ec:003}
  \textit{if }d(T^nx,T^ny)\leq\alpha\textit{ for all }|n|\leq N\text{ then }d(x,y)<\delta,
 \end{equation}
 for all $x,y\in X$, according to Proposition \ref{prop:uexp}. Finally, as $T$ is uniformly continuous we can take $\rho>0$, $\rho\leq\delta$, such that  any segment of length $2N+1$ of a $\rho$-pseudo orbit, say $x_0,\ldots,x_{2N+1}$, is $\delta$-shadowed by its first element $x_0$, that is,
 \begin{equation}\label{ec:004}
  \textit{if }d(Tx_n,x_{n+1})<\rho,\,0\leq n\leq2N,\,\textit{then }d(T^nx_0,x_n)<\delta,\,0\leq n\leq2N+1.
 \end{equation}

 We will prove that this $\rho$ works by induction in the number of jumps in the $\rho$-pseudo orbits. If a $\rho$-pseudo orbit has only one jump, as $\rho\leq\delta$ we know by condition (\ref{ec:001}) that it can be $\alpha$-shadowed. Assume now that $\xi=(x_n)_{n\in\Z}$ is a $\rho$-pseudo orbit with $k\geq2$ jumps. Indices can be arranged so that the last jump takes place in the step from $x_{2N}$ to $x_{2N+1}$, so that $(x_n)_{n>2N}$ is a segment of a true orbit. By condition (\ref{ec:004}) we can replace $(x_n)_{n=0}^{2N}$ by $(T^nx_0)_{n=0}^{2N}$ in $\xi$ getting a $\delta$-pseudo orbit\footnote{We denote $(x_n)_{n\in I}\sqcup(x_n)_{n\in J}=(x_n)_{n\in I\cup J}$ if $I,J\subseteq\Z$ are disjoint sets of indices.} 
 $$
 \xi'=(x_n)_{n<0}\sqcup(T^nx_0)_{0\leq n\leq 2N}\sqcup(x_n)_{n>2N}
 $$
 which $\alpha$-shadows $\xi$ because $\delta\leq\alpha$. Note that on one hand the bi-sequence given by
 $$
 \eta=(x_n)_{n<0}\sqcup(T^nx_0)_{n\geq0}
 $$
 is a $\rho$-pseudo orbit with less than $k$ jumps, then by the inductive hypothesis there exists $y\in X$ that $\alpha$-shadows $\eta$. By condition (\ref{ec:002}) we know that in fact $\eta$ is $\nicefrac\alpha2$-shadowed by $y$. On the other hand consider 
 $$
 \zeta=(T^nx_0)_{n\le2N}\sqcup(x_n)_{n>2N}
 $$ 
 which is a $\delta$-pseudo orbit with one jump. Then by condition (\ref{ec:001}) there exists $z\in X$ that $\alpha$-shadows $\zeta$. Again condition (\ref{ec:002}) implies that $\zeta$ is $\nicefrac\alpha2$-shadowed by $z$.

 Now, as the segment of orbit $(T^nx_0)_{0\leq n\leq2N}$ is in both sequences $\eta$ and $\zeta$ we have that the corresponding segments of the orbits of $y$ and $z$ verifies $d(T^ny,T^nz)<\alpha$ for $0\leq n\leq2N$. Hence, by condition (\ref{ec:003}) we have that $d(T^Ny,T^Nz)<\delta$. Consequently 
 $$
 \tau=(T^ny)_{n<N}\sqcup(T^nz)_{n\geq N}
 $$
 is a one-jump $\delta$-pseudo orbit that $\nicefrac\alpha2$-shadows $\xi'$. A new application of condition (\ref{ec:001}) gives an element $w\in X$ that $\alpha$-shadows $\tau$. Finally, as $w$ $\alpha$-shadows $\tau$, $\tau$ $\nicefrac\alpha2$-shadows $\xi'$ and $\xi'$ $\alpha$-shadows $\xi$, we obtain by repeated application of condition (\ref{ec:002}) that $w$ $\alpha$-shadows $\xi$, and we are done. 
\end{proof}

\vspace{1cm}

\begin{flushleft}
 {\small {\sc Departamento de Matemática y Estadística del Litoral,\\ Universidad de la Rep\'ublica, Gral. Rivera 1350, Salto, Uruguay.}\\
 Partially supported by SNI--ANII.\\
 Email: machigar@unorte.edu.uy.}
\end{flushleft}

\vspace{1cm}

\begin{bibdiv}
\begin{biblist}

\bib{AH}{book}{
author={N. Aoki},
author={K. Hiraide},
title={Topological theory of dynamical systems},
publisher={North-Holland},
year={1994}}

\bib{Br62}{article}{
author={B. F. Bryant},
title={Expansive self-homeomorphisms of a compact metric space},
journal={Amer. Math. Monthly},
volume={69},
year={1962},
pages={386--391}}

\bib{Fat}{article}{
author={A. Fathi},
title={Expansiveness, hyperbolicity and Hausdorff dimension,},
journal={Commun. Math. Phys.},
volume={126},
year={2005},
pages={249--262}}

\bib{Wal}{article}{
author={P. Walters},
title={On the pseudo orbit tracing property and its relationship to stability},
journal={The Structure of Attractors in Dynamical Systems, Lecture Notes in Math.},
year={1978},
volume={668},
pages={231--244}}

\bib{Utz}{article}{
author={W. R. Utz},
title={Unstable homeomorphisms},
journal={Proc. Amer. Math. Soc.},
year={1950},
volume={1},
number={6},
pages={769--774}}

\end{biblist}
\end{bibdiv}

\end{document}